\newtheorem{theorem}{Theorem}[section]
\newtheorem{lemma}[theorem]{Lemma}
\newtheorem{proposition}[theorem]{Proposition}
\newtheorem*{maintheorem}{Main Theorem}
\newcommand{\C}{\mathbb{C}}
\newcommand{\R}{\mathbb{R}}
\newcommand{\CH}{\mathbb{C}\mathrm{H}}
\newcommand{\CP}{\mathbb{C}\mathrm{P}}
\DeclareMathOperator{\Id}{Id}
\DeclareMathOperator{\Ad}{Ad}
\DeclareMathOperator{\ad}{ad}
\DeclareMathOperator{\Image}{Im}
\DeclareMathOperator{\Exp}{Exp}
\newcommand{\g}[1]{\ensuremath{\mathfrak{#1}}}
\newcommand{\s}[1]{\ensuremath{\mathsf{#1}}}
\newcommand{\cal}[1]{\ensuremath{\mathcal{#1}}}
\begin{document}

\title[Homogeneous Lagrangian foliations]{Homogeneous Lagrangian foliations\\on complex space forms}

\author[J.~C. D\'\i{}az-Ramos]{Jos\'{e} Carlos D\'\i{}az-Ramos}
\address{Department of Mathematics,
University of Santiago de Compostela, Spain.}
\email{josecarlos.diaz@usc.es}

\author[M. Dom\'{\i}nguez-V\'{a}zquez]{Miguel Dom\'{\i}nguez-V\'{a}zquez}
\address{Department of Mathematics,
University of Santiago de Compostela, Spain.}
\email{miguel.dominguez@usc.es}

\author[T. Hashinaga]{Takahiro Hashinaga}
\address{Faculty of Education, Saga University, Saga, Japan.}
\email{hashinag@cc.saga-u.ac.jp}

\subjclass[2010]{Primary 53D12; Secondary 53C12, 53C35, 57S20}
\keywords{Homogeneous Lagrangian foliation, Lagrangian submanifold, isometric action, complex space form, complex hyperbolic space, horocycle foliation}

\begin{abstract}
We classify holomorphic isometric actions on complex space forms all whose orbits are Lagrangian submanifolds, up to orbit equivalence. The only examples are Lagrangian affine subspace foliations of complex Euclidean spaces, and Lagrangian horocycle foliations of complex hyperbolic spaces.
\end{abstract}

\thanks{The first and second authors have been supported by the projects PID2019-105138GB-C21 (AEI/FEDER, Spain) and ED431C 2019/10, ED431F 2020/04 (Xunta de Galicia, Spain).  The second author acknowledges support of the Ram\'on y Cajal program of the Spanish State Research Agency. The third author has been supported by JSPS KAKENHI Grant Number 16K17603. This work was partly supported by Osaka City University Advanced Mathematical Institute (MEXT Joint Usage/Research Center on Mathematics and Theoretical Physics JPMXP0619217849).}

\maketitle
\vspace{-2ex}

\section{Introduction}
\enlargethispage{2ex}\thispagestyle{empty}

Let $({M},g,J)$ be a K\"{a}hler manifold with Riemannian metric $g$ and complex structure~$J$.
As a K\"{a}hler manifold, $M$ carries a natural symplectic structure defined as $\omega(X,Y)=g(X,JY)$, which turns it into a symplectic manifold.
A submanifold $N$ of ${M}$ is said to be \emph{Lagrangian} if the symplectic form $\omega$ vanishes on $TN$ and the dimension of $N$ is half the dimension of $M$.
In the case of a K\"{a}hler manifold this is equivalent to saying that $N$ is totally real and of half the (real) dimension of $M$, or equivalently, $J T_p N= \nu_p M$ for all $p\in N$, and where $\nu N$ denotes the normal bundle of $N$.

A foliation on a symplectic manifold is called Lagrangian if all its leaves are Lagrangian submanifolds. These are important objects in symplectic geometry and topology, as exemplified in Weinstein's foundational results~\cite{Weinstein} or in more recent contributions~\cite{Gavela}. Now, if $M$ is a K\"ahler manifold, a foliation on $M$ is said to be \emph{homogeneous} if it is the family of orbits of an action of a connected Lie group of automorphisms of the K\"ahler structure of $M$, that is, of holomorphic isometries of $M$. This article is motivated by the problem of investigating homogeneous Lagrangian foliations on K\"ahler manifolds. In this sense, our aim here is to provide the classification of homogeneous Lagrangian foliations on complex space forms, that is, complete simply connected K\"ahler manifolds with constant holomorphic sectional curvature: complex Euclidean spaces $\C^n$, complex projective spaces $\CP^n$, and complex hyperbolic spaces~$\CH^n$.

\begin{maintheorem}
A homogeneous Lagrangian foliation on a complex space form is holomorphically congruent to a Lagrangian affine subspace foliation on $\C^n$ or to a Lagrangian horocycle foliation on $\CH^n$.
\end{maintheorem}

Here, by Lagrangian affine subspace foliation we mean the orbit foliation of the action of a subgroup of translations of $\C^n$ given by a Lagrangian subspace $V\cong\R^n$ of~$\C^n$. More interesting is the example in the complex hyperbolic space, which is constructed as follows.
As a symmetric space, the complex hyperbolic space can be written as $\CH^n\cong\s{SU}(1,n)/\s{U}(n)$, where $K\cong\s{U}(n)$ is a maximal compact subgroup of $G=\s{SU}(1,n)$.
Let $\g{g}=\g{k}\oplus\g{p}$ be the Cartan decomposition of the Lie algebra $\g{g}=\g{su}(1,n)$ with respect to $\g{k}\cong\g{u}(n)$, where $\g{p}$ is the orthogonal complement of $\g{k}$ in $\g{g}$ with respect to the Killing form of $\g{g}$.
If we choose $\g{a}$ a maximal abelian subspace of $\g{p}$, the restricted root space decomposition of $\g{g}$ can be written as
$\g{g}=\g{g}_{-2\alpha}\oplus\g{g}_{-\alpha}\oplus\g{g}_0
\oplus\g{g}_\alpha\oplus\g{g}_{2\alpha}$.
The fact that $\CH^n$ is a K\"{a}hler manifold implies that $\g{g}_\alpha$ is a complex vector space of dimension $n-1$.
We refer to Subsection~\ref{subsec:chn} for further details.
We define $\g{h}=\g{l}\oplus\g{g}_{2\alpha}$, where $\g{l}$ is a Lagrangian subspace of $\g{g}_\alpha\cong\C^{n-1}$. We denote by $H$ the connected Lie subgroup of $\s{SU}(1,n)$ whose Lie algebra is $\g{h}$.
We call the foliation induced by $H$ on $\CH^n$ the \emph{Lagrangian horocycle foliation}.
It can be proved that all the orbits of $H$ are congruent to each other, and that they all are Lagrangian submanifolds of $\CH^n$. The Lagrangian horocycle foliation is also an example of a homogeneous polar foliation~\cite{DDK:mathz}.

Our result fits into the framework of the investigation of homogeneous Lagrangian submanifolds in K\"ahler manifolds. In this setting, Bedulli and Gori~\cite{BG:cag} studied general properties of these objects, and classified compact homogeneous Lagrangian submanifolds in $\CP^n$ arising as orbits of simple compact subgroups of $\mathsf{SU}(n+1)$. The case of non-simple subgroups constitutes still an open problem (see also~\cite{PP:tohoku}). Interestingly, there is a complete classification of compact homogeneous Lagrangian submanifolds in complex hyperquadrics, due to Ma and Ohnita~\cite{MO:mathz}. Also, Kajigaya and the third author~\cite{HK} derived a classification of homogeneous Lagrangian submanifolds of complex hyperbolic spaces arising as orbits of subgroups of the solvable Iwasawa group of $\CH^n$, and established a correspondence between compact homogeneous Lagrangian submanifolds in $\CH^n$, in $\C^n$, and in $\CP^{n-1}$. There are also some partial classifications of homogeneous totally real submanifolds in Hermitian symmetric spaces under additional geometric assumptions~(cf.~\cite{BCO}).

The starting point for the proof of the Main Theorem is a known result (see Proposition~\ref{prop:abelian}) that ensures that a homogeneous Lagrangian foliation on a K\"ahler manifold must be induced by an abelian Lie group. This easily implies the non-existence of such foliations on complex projective spaces (see~\S\ref{subsec:cpn}). The non-compact cases, and especially the hyperbolic one, are much more involved. In both settings, the gist of the proof is to deal with the possible existence of helicoidal isometries in the group. In the hyperbolic case (see~\S\ref{subsec:chn_proof}), a first observation is that the group acting is contained in a maximally non-compact Borel subgroup of $\s{SU}(1,n)$. However, dealing with isometries with toroidal component is still rather technical. In our proof, the introduction of a certain suitable self-adjoint operator (see~\eqref{eq:Phi}) turned out to be crucial. We believe that generalizing this idea would be fundamental in order to address the problem in Hermitian symmetric spaces of non-compact type and higher rank. The classification of homogeneous Lagrangian foliations on such spaces seems to be an interesting problem in view of the scarcity of examples in the rank one case and as a first approximation to the more general study of homogeneous Lagrangian submanifolds. But it is also a challenging problem, inasmuch as homogeneous Lagrangian foliations induced by groups of isometries without rotational components (i.e.\ contained in the corresponding solvable Iwasawa group) have not been classified yet. 

This paper is organized as follows. Section~\ref{sec:prelim} contains some preliminaries regarding the complex hyperbolic space and isometric actions with isotropic orbits on K\"ahler manifolds. The proof of the Main Theorem is the content of Section~\ref{sec:proof}.
\smallskip

\textbf{Acknowledgments.} The authors would like to thank an anonymous referee for helpful suggestions on an earlier version of this article.

\section{Preliminaries}\label{sec:prelim}
In this section, we collect some basic facts regarding the complex hyperbolic space $\CH^n$ and the algebraic structure of its isometry group (in \S\ref{subsec:chn}), and we include a proof of a known result about actions with isotropic orbits in the K\"ahler setting (in \S\ref{subsec:isotropic}).

\subsection{The complex hyperbolic space}\label{subsec:chn}\hfill

We denote by $\CH^n$ the complex hyperbolic space with constant negative holomorphic sectional curvature.
It is well known that it can be realized as a Hermitian symmetric space $G/K$, where $G=\s{SU}(1,n)$ is, up to a finite quotient, the connected component of the identity of the isometry group of $\CH^n$, and $K=\s{S}(\s{U}(1) \times \s{U}(n))$ is the isotropy group of $G$ at some point $o \in \CH^n$. Moreover, $G=\s{SU}(1,n)$ coincides (again, up to finite quotient) with the group of holomorphic isometries of $\CH^n$.

Let us denote by $\g{g}=\g{su}(1,n)$ and $\g{k}=\g{s}(\g{u}(1)\oplus\g{u}(n))\cong\g{u}(n)$ the Lie algebras of $G$ and $K$, respectively, and let $\g{g}=\g{k} \oplus \g{p}$ be the corresponding Cartan decomposition, where $\g{p}$ is the orthogonal complement of $\g{k}$ in $\g{g}$ with respect to the Killing form $\cal{B}$ of $\g{g}$. Recall that we can identify $\g{p}$ with the tangent space $T_o \CH^n$ as vector spaces. Denote by $\theta$ the corresponding Cartan involution, whose $+1$ (resp.\ $-1$) eigenspace is $\g{k}$ (resp.\ $\g{p}$).
Then $\cal{B}_\theta(X, Y):=-\cal{B}(\theta X, Y)$, $X$, $Y \in \g{g}$, defines a positive definite inner product on $\g{g}$ that satisfies
$\cal{B}_\theta(\ad(X)Y,Z) =-\cal{B}_\theta(Y, \ad(\theta X)Z)$,
for all $X$, $Y$, $Z \in \g{g}$.
The $\Ad(K)$-invariant inner product $\cal{B}_\theta$ on $\g{p}$ induces the Riemannian metric on $\CH^n\cong G/K$.

As for any other irreducible Hermitian symmetric space $G/K$, the Lie subalgebra $\g{k}$ of $\g{g}$ has a $1$-dimensional center $Z(\g{k})$. Then, there is an element $\zeta \in Z(\g{k})$, which is unique up to sign, such that $J_o:=\ad(\zeta)\vert_{\g{p}}$ defines an $\Ad(K)$-invariant (orthogonal) complex structure on $\g{p}$. Since $\Ad(K)$-invariant structures on $\g{p}$ induce $G$-invariant structures on $G/K$, $J_o$ determines a $G$-invariant (orthogonal) complex structure $J$ on $G/K$. Then $G/K$ becomes isomorphic to $\CH^n$ as a K\"ahler manifold.

Let $\g{a}$ be a maximal abelian subspace in $\g{p}$, which is $1$-dimensional, as $\CH^n$ is a rank one symmetric space.
This abelian subspace $\g{a}$ induces a  $\cal{B}_\theta$-orthogonal direct sum decomposition
$\g{g}=\g{g}_{-2 \alpha} \oplus \g{g}_{-\alpha} \oplus \g{g}_{0} \oplus \g{g}_{\alpha} \oplus \g{g}_{2\alpha}$, called a restricted root space decomposition of $\g{g}$, where
$\g{g}_{\lambda}=\{ X \in \g{g}: \mathrm{ad}(H)X=\lambda(H)X,\text{ for all $H \in \g{a}$} \}$ for each $\lambda \in \g{a}^*$.
Let $\g{k}_0=\g{g}_0 \cap \g{k} \cong \g{u} (n-1)$ be the centralizer of $\g{a}$ in $\g{k}$.
It turns out that $\g{g}_{0}=\g{k}_0 \oplus \g{a}$,  $\g{g}_{\alpha}$ is normalized by $\g{k}_0$, and both $\g{a}$ and $\g{g}_{2\alpha}$ are centralized by $\g{k}_0$.
Set $\g{n}=\g{g}_{\alpha} \oplus \g{g}_{2\alpha}$, which is a nilpotent Lie subalgebra of $\g{g}$ isomorphic to the $(2n-1)$-dimensional Heisenberg Lie algebra.
Then the vector space direct sum $\g{g}=\g{k}\oplus\g{a} \oplus \g{n}$ is an Iwasawa decomposition of $\g{g}$, which induces, at the Lie group level, a decomposition $G\cong K\times A\times N$ as a Cartesian product.

It follows from the Iwasawa decomposition at the Lie group level that the connected closed subgroup $AN$ of $G$ with Lie algebra $\g{a} \oplus \g{n}$ acts simply transitively on $G/K\cong\CH^n$ by the natural left action.
In particular, the map $\Phi\colon AN \rightarrow \CH^n$ defined by $\Phi(g)=g(o)$ is a diffeomorphism.
Thus, we can equip $AN$ with the K\"{a}hler structure so that $\Phi$ is a holomorphic isometry.
We will denote by $\langle\cdot , \cdot \rangle$ the induced Riemannian metric on $AN$, which turns out to be left-invariant on $AN$, and by $J$ the induced left-invariant complex structure on $AN$, given by $J=\Phi^{-1}_* \circ J \circ \Phi_*$.
It turns out that the  inner product $\langle\cdot , \cdot \rangle=\Phi^* \cal{B}_\theta$ on $\g{a}\oplus\g{n}\cong T_eAN$ is related to the inner product $\cal{B}_\theta$ on $\g{g}$ by
$\langle X,Y\rangle=\cal{B}_\theta(X_{\g{a}},Y_{\g{a}})+
\frac{1}{2}\cal{B}_\theta (X_{\g{n}},Y_{\g{n}})$,
where subscripts mean the $\g{a}$ and $\g{n}$ components, respectively, for any $X$, $Y\in\g{a}\oplus\g{n}$.
In particular, for any $T\in\g{k}_0$ and $X$, $Y\in\g{n}$ we have
\[
\langle \ad(T)X,Y\rangle = -\langle X,\ad(T)Y\rangle,
\]
that is, the elements of $\ad(\g{k}_0)\vert_{\g{n}}$ are skew-symmetric with respect to $\langle\cdot , \cdot \rangle$ (and $\cal{B}_\theta$).
One can also prove that $\g{g}_{\alpha}$ is $J$-invariant, namely, $\g{g}_{\alpha}$ is a complex subspace in $\g{a} \oplus \g{n}$, and $J \g{a} = \g{g}_{2\alpha}$.
Let $B \in \g{a}$ be a unit vector and define $Z:=JB \in \g{g}_{2\alpha}$.
Then, the Lie bracket of $\g{a} \oplus \g{n}$ is given by
\begin{align*}
[aB+U+xZ,\, bB+V+yZ]=-\frac{b}{2}U+\frac{a}{2}V+\bigl( -bx+ay+\langle JU, V \rangle \bigr)Z,
\end{align*}
where $a, b, x, y \in \R$ and $U$, $V \in \g{g}_{\alpha}$.

\begin{lemma}\label{lemma:adJ}
	We have $\mathrm{ad}(T) J\vert_{\g{g}_\alpha} =J  \mathrm{ad}(T)\vert_{\g{g}_\alpha}$, for any $T\in\g{k}_0$.
\end{lemma}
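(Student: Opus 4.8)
The plan is to exploit the explicit bracket formula for $\g{a}\oplus\g{n}$ given just above, which encodes the restriction of $J$ to $\g{g}_\alpha$ through the single identity $[U,V]=\langle JU,V\rangle Z$ for $U,V\in\g{g}_\alpha$ (take $a=b=x=y=0$ in the formula). Since $Z=JB\in\g{g}_{2\alpha}$ is centralized by $\g{k}_0$ and $\g{g}_\alpha$ is normalized by $\g{k}_0$, the idea is that applying $\ad(T)$ for $T\in\g{k}_0$ and using that $\ad(T)$ is a derivation (Jacobi identity) will convert this one structural relation into the desired commutation, with no need to compute $J$ or $\ad(T)$ explicitly.

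Concretely, for $T\in\g{k}_0$ and $U,V\in\g{g}_\alpha$ I would evaluate $\ad(T)[U,V]$ in two ways. On one hand $\ad(T)[U,V]=\langle JU,V\rangle\,\ad(T)Z=0$, because $\ad(T)Z=[T,Z]=0$ as $\g{g}_{2\alpha}$ is centralized by $\g{k}_0$. On the other hand the Jacobi identity gives $\ad(T)[U,V]=[\ad(T)U,V]+[U,\ad(T)V]$, and since $\g{g}_\alpha$ is normalized by $\g{k}_0$ both $\ad(T)U$ and $\ad(T)V$ again lie in $\g{g}_\alpha$, so the bracket formula applies termwise and yields $\bigl(\langle J\ad(T)U,V\rangle+\langle JU,\ad(T)V\rangle\bigr)Z$. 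Comparing the two computations and cancelling $Z\neq 0$ leaves $\langle J\ad(T)U,V\rangle+\langle JU,\ad(T)V\rangle=0$.

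To finish, I would invoke the skew-symmetry of $\ad(T)\vert_{\g{n}}$ recorded above, that is $\langle JU,\ad(T)V\rangle=-\langle\ad(T)JU,V\rangle$, so that the previous identity becomes $\langle(J\ad(T)-\ad(T)J)U,V\rangle=0$ for every $V\in\g{g}_\alpha$. As $J$ and $\ad(T)$ both preserve $\g{g}_\alpha$, the vector $(J\ad(T)-\ad(T)J)U$ lies in $\g{g}_\alpha$, and positive-definiteness of $\langle\cdot,\cdot\rangle$ on $\g{g}_\alpha$ forces it to vanish, giving $\ad(T)J=J\ad(T)$ on $\g{g}_\alpha$. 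I expect no serious analytic obstacle here, the whole argument being a short manipulation of structure constants; the only steps requiring care are the bookkeeping of which subspace each vector belongs to, so that the bracket formula and the skew-symmetry relation are legitimately applicable, and the tracking of the sign in the skew-symmetry step.
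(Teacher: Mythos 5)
Your proof is correct and follows essentially the same route as the paper: both arguments rest on the Jacobi identity applied to $T$ and two elements of $\g{g}_\alpha$, the bracket relation $[U,V]=\langle JU,V\rangle Z$, the facts that $\g{k}_0$ centralizes $\g{g}_{2\alpha}$ and normalizes $\g{g}_\alpha$, and the skew-symmetry of $\ad(T)$, concluding by nondegeneracy of the inner product on $\g{g}_\alpha$. The only cosmetic difference is that the paper expands $[[T,JU],V]$ via Jacobi, whereas you expand $\ad(T)[U,V]=0$ as a derivation and insert skew-symmetry at the end; these are equivalent rearrangements of the same computation.
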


\begin{proof}
Let $T\in\g{k}_0$ and $U$, $V\in\g{g}_\alpha$. Since $[JU,V]\in\g{g}_{2\alpha}$ and $\g{k}_0$ normalizes $\g{g}_\alpha$ and centralizes $\g{g}_{2\alpha}$, the Jacobi identity gives
\begin{align*}
\langle J[T,JU],V\rangle Z
&{}=[[T,JU],V]
=-[[JU,V],T]-[[V,T],JU]\\
&{}=-\langle J[V,T],JU\rangle Z
=-\langle [T,U],V\rangle Z.
\end{align*}
Hence, $J[T,JU]=-[T,U]$, which is equivalent to our claim.
\end{proof}

\subsection{Actions with isotropic orbits}\label{subsec:isotropic}\hfill

We recall here some known facts about K\"ahler geometry from the symplectic viewpoint, and Lie group actions with isotropic orbits.

If $M$ is a K\"ahler manifold with metric $\langle\cdot,\cdot\rangle$ and complex structure $J$, then $\omega(X,Y):=\langle X,JY\rangle$ defines a symplectic structure on $M$. Holomorphic isometries of $M$ are then symplectomorphisms of $(M,\omega)$. A submanifold $N$ of a symplectic manifold $(M,\omega)$ is called isotropic if the symplectic form  of $M$ vanishes on tangent vectors to $N$, i.e.\ $\omega(v,w)=0$ for all $v$, $w\in T_pN$, $p\in N$. In the context of K\"ahler geometry, isotropic submanifolds are usually called totally real, as in this case the condition translates into $J T_pN\perp T_pN$, for each $p\in N$. An isotropic submanifold $N$ is called Lagrangian if it is of the maximal possible dimension among isotropic submanifolds, i.e.\ $2\dim N=\dim M$, where we consider real dimensions.

We will make use of a well-known result on actions on symplectic manifolds~\cite[Proposition~III.2.12]{A}. Here, we state it in the setting of K\"ahler geometry, and include a proof for completeness. 

\begin{proposition}\label{prop:abelian}
Let $H$ be a connected Lie group acting almost effectively and by holomorphic isometries on a K\"ahler manifold $M$. If all $H$-orbits are isotropic, then $H$ is abelian.
\end{proposition}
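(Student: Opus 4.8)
The plan is to translate the abelian property of $H$ into the vanishing of Lie brackets of the fundamental vector fields of the action, exploiting the closedness and non-degeneracy of $\omega$. Since $H$ acts by holomorphic isometries, every element of $H$ is a symplectomorphism of $(M,\omega)$; consequently, for each $X$ in the Lie algebra $\g{h}$ of $H$, the associated fundamental vector field $X^*$ on $M$ is symplectic, i.e.\ $\cal{L}_{X^*}\omega=0$. The assignment $X\mapsto X^*$ is a Lie algebra (anti)homomorphism, and almost effectiveness of the action guarantees that its kernel is trivial, so that $X\mapsto X^*$ is injective. Hence, to prove that $\g{h}$ is abelian (and thus that the connected group $H$ is abelian), it suffices to show that $[X^*,Y^*]=0$ for all $X$, $Y\in\g{h}$.

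The first key observation is that, for any point $p\in M$, the tangent space to the orbit $H\cdot p$ is spanned by the vectors $X^*_p$, $X\in\g{h}$. Therefore the isotropy (totally real) hypothesis on the orbits is equivalent to the statement that the function $\omega(X^*,Y^*)$ vanishes identically on $M$, for every fixed pair $X$, $Y\in\g{h}$. Next, I would use Cartan's formula $\cal{L}_{X^*}=d\,\iota_{X^*}+\iota_{X^*}\,d$ together with $d\omega=0$ and $\cal{L}_{X^*}\omega=\cal{L}_{Y^*}\omega=0$ to compute the interior product of $\omega$ with the bracket. From the identity $\iota_{[X^*,Y^*]}=\cal{L}_{X^*}\iota_{Y^*}-\iota_{Y^*}\cal{L}_{X^*}$ applied to $\omega$, and since $d(\iota_{Y^*}\omega)=\cal{L}_{Y^*}\omega=0$, one obtains
\[
\iota_{[X^*,Y^*]}\omega=d\bigl(\iota_{X^*}\iota_{Y^*}\omega\bigr)=-\,d\bigl(\omega(X^*,Y^*)\bigr).
\]
Because $\omega(X^*,Y^*)\equiv 0$ by the previous step, the right-hand side vanishes, so $\iota_{[X^*,Y^*]}\omega=0$. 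The non-degeneracy of $\omega$ then forces $[X^*,Y^*]=0$.

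Finally, injectivity of $X\mapsto X^*$ yields $[X,Y]=0$ for all $X$, $Y\in\g{h}$, and since $H$ is connected this shows that $H$ is abelian. The computation is short, so I expect the only genuinely delicate points to be the bookkeeping with the sign conventions in the Cartan calculus (in particular the correct form of the identity $[\cal{L}_{X^*},\iota_{Y^*}]=\iota_{[X^*,Y^*]}$), and the justification that almost effectiveness of a connected group forces the infinitesimal map $X\mapsto X^*$ to be injective. For the latter one notes that the kernel of the action is a discrete, hence central, normal subgroup, so its Lie algebra is trivial and the differential of the action is injective on $\g{h}$.
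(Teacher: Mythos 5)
Your proof is correct and takes essentially the same route as the paper's: both use that isotropy of the orbits gives $\omega(X^*,Y^*)\equiv 0$, combine this with $d\omega=0$ and $\cal{L}_{X^*}\omega=\cal{L}_{Y^*}\omega=0$ to conclude $\omega([X^*,Y^*],\cdot)=0$, and then invoke non-degeneracy of $\omega$ together with injectivity of $X\mapsto X^*$ (from almost effectiveness and connectedness) to get that $\g{h}$, hence $H$, is abelian. The only difference is cosmetic: you package the key computation via Cartan's magic formula and the identity $[\cal{L}_{X^*},\iota_{Y^*}]=\iota_{[X^*,Y^*]}$, whereas the paper expands $d\omega(X^*,Y^*,W)=0$ using the invariant formula for the exterior derivative of a $2$-form.
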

\begin{proof}
For each $X\in\g{h}$ in the Lie algebra of $H$, we denote by $X^*$ the associated fundamental Killing vector field on $M$, given by $X^*_p= \frac{d}{dt}\vert_{t=0}\Exp (tX)(p)$, $p\in M$, where $\Exp$ denotes the Lie group exponential map of $H$. Since the $H$-action is by holomorphic isometries, it preserves the K\"ahler form $\omega$, and hence $\cal{L}_{X^*}\omega=0$ for any $X\in\g{h}$. Since $\omega$ is closed, for any $X$, $Y\in\g{h}$ and any smooth vector field $W$ on $M$, we get
\begin{align*}
0=d\omega(X^*,Y^*,W)&=
\begin{aligned}[t]
&X^*\omega(Y^*,W)+Y^*\omega(W,X^*)+W\omega(X^*,Y^*)
\\
&-\omega([X^*,Y^*],W)-\omega([Y^*,W],X^*)-\omega([W,X^*],Y^*)
\end{aligned}
\\
&=W\omega(X^*,Y^*)+\omega([X^*,Y^*],W)=\omega([X^*,Y^*],W),
\end{align*}
where in the third equality we have taken $\cal{L}_{X^*}\omega=\cal{L}_{Y^*}\omega=0$ into account, and in the last one we have used that $\omega(X^*,Y^*)=0$, as all $H$-orbits are isotropic by assumption. Since $\omega$ is non-degenerate, we obtain $[X^*,Y^*]=0$ for any $X$, $Y\in\g{h}$.
But $X\in\g{h}\mapsto X^*\in\Gamma(TM)$ is a Lie algebra anti-homomorphism, which is one-to-one due to the assumption that $H$ acts almost effectively on $M$. Thus, we get $[X,Y]^*=0$, and then, $[X,Y]=0$, for any $X$, $Y\in\g{h}$. This shows that $\g{h}$ is abelian, and hence, also $H$, since it is~connected.
\end{proof}

\section{Proof of the Main theorem}\label{sec:proof}
We divide the proof in three subsections according to the sign of the curvature of the ambient complex space form.

\subsection{Homogeneous Lagrangian foliations on complex projective spaces}\label{subsec:cpn}\hfill

The non-existence of homogeneous Lagrangian foliations on complex projective spaces is a direct consequence of Proposition~\ref{prop:abelian}. Indeed, the Lie group $\s{SU}(n+1)$ is (up to a finite quotient) the connected component of the identity of the isometry group the complex projective space $\CP^n$, which coincides with the group of holomorphic isometries of $\CP^n$. Let $H$ be a connected Lie subgroup of $\s{SU}(n+1)$ acting on $\CP^n$ with Lagrangian orbits. By Proposition~\ref{prop:abelian}, $H$ is abelian. Therefore, $H$ is contained in a maximal torus $T^n$ of the compact Lie group $\s{SU}(n+1)$. But the action of any maximal torus of $\s{SU}(n+1)$ on $\CP^n$ has a fixed point (since by rank reasons $T^n$ is contained in a maximal proper compact subgroup $\s{U}(n)$ of $\s{SU}(n+1)$, which necessarily fixes a point). Hence, also the $H$-action has a fixed point, which contradicts the assumption that all orbits are Lagrangian.

\subsection{Homogeneous Lagrangian foliations on complex Euclidean spaces}\label{subsec:cn}\hfill

We recall first some well-known facts about the isometry group of a Euclidean space $\R^{n}$. The connected component $I^0(\R^n)$ of the identity element of the isometry group of $\R^{n}$ is isomorphic to the semi-direct product $\s{SO}(n)\times_\Phi\R^n$,
where $\Phi\colon \s{SO}(n)\to\mathop{\rm Aut}(\R^n)$ is given by $\Phi(a)({v})=a{v}$.
Hence, the group operation is given by the
formula $(a,{v})(b,{w})=(ab,{v}+a{w})$. The isometry group
of $\R^n$ acts on $\R^n$ in the obvious way by
$(a,{v}){x}=a{x}+{v}$.
The Lie algebra of $I^0(\R^n)$ is the semi-direct sum
$\g{so}(n)\oplus_\phi \R^n$ where
$\phi\colon\g{so}(n)\to\mathop{\rm Der}(\R^n)$ is given by
$\phi(X)({v})=X{v}$.
Thus, the Lie bracket is given by
$[X+{v},Y+{w}]=XY-YX+X{w}-Y{v}$,
and the adjoint representation is
$\Ad(a,{v})(X+{w})=aXa^{-1}-aXa^{-1}{v}+a{w}$.

Now, let us assume that $H$ is a connected Lie subgroup of the group $\s{U}(n)\times_\Phi\C^n$ of holomorphic isometries of $\C^n$ acting on $\C^n$ in such a way that all its orbits are Lagrangian. Of course, $\s{U}(n)\times_\Phi\C^n$ is a Lie subgroup of the group $I^0(\C^n)=\s{SO}(2n)\times_\Phi\C^n$ of orientation-preserving isometries of $\C^n\cong\R^{2n}$.
From Proposition~\ref{prop:abelian}, $H$ is abelian.
We denote by $\pi\colon\g{so}(2n)\oplus_\phi\C^{n}\to\g{so}(2n)$ the projection onto the first component, which is a Lie algebra homomorphism. 
We define $V=\C^n\cap\g{h}$ the pure translational part of $\g{h}$.
Note that $\dim\g{h}=\dim\pi(\g{h})+\dim V$.
Since $\g{h}$ is abelian, so is $\pi(\g{h})\subset\g{so}(2n)$. Our goal in the rest of this subsection will be to show that $\pi(\g{h})=0$, or equivalently, $\g{h}=V$.

We define the map
\[
\xi\colon\pi(\g{h})\to\C^n\ominus V,\quad
X\mapsto \xi(X),
\]
by the requirement $X+\xi(X)\in\g{h}$.
In this subsection, we use the symbol $\ominus$ to denote orthogonal complement.
This map is well-defined because $X+\xi(X)$, $X+\xi'(X)\in\g{h}$ implies
$\xi(X)-\xi'(X)=(X+\xi(X))-(X+\xi'(X))\in\g{h}$, and hence, $\xi(X)=\xi'(X)$.

We now follow the approach given in~\cite[Theorem~2.1]{Di02}.
Since $\pi(\g{h})$ is a commuting family of skew-symmetric endomorphisms, we can define
\[
\g{h}_\lambda=\{{v}\in\C^n:X^2{v}=-\lambda(X)^2{v},\text{ for each $X\in\pi(\g{h})$}\},
\]
where $\lambda\in\pi(\g{h})^*$ is a 1-form.
If $\Sigma\subset\pi(\g{h})^*$ denotes the set of all non-zero 1-forms $\lambda$ for which $\g{h}_\lambda\neq 0$,
then we have the $\pi(\g{h})$-invariant orthogonal decomposition
\[
\C^n=\bigoplus_{\lambda\in\Sigma\cup\{0\}}\g{h}_\lambda,
\quad\text{ where }\quad
\g{h}_0=\bigcap_{X\in\pi(\g{h})}\ker X.
\]
We denote $\g{h}_0^\perp=\oplus_{\lambda\in\Sigma}\g{h}_\lambda$, the orthogonal complement of $\g{h}_0$ in $\C^n$, which is also $\pi(\g{h})$-invariant.
In particular, $\pi(\g{h})(\C^n)=\g{h}_0^\perp$, which implies that $\pi(\g{h})$ is isomorphic to an abelian Lie subalgebra of $\g{so}(\g{h}_0^\perp)$.

An element $X\in\pi(\g{h})$ is called regular if $\lambda(X)\neq 0$ for all $\lambda\in\Sigma$.
The subset of regular elements is open and dense in $\pi(\g{h})$.
If $X\in\pi(\g{h})$ is regular, then $\ker X=\g{h}_0$.
Moreover, since $X$ is skew-symmetric we have the orthogonal decomposition $\C^n=(\ker X)\oplus(\Image X)$.
Thus, $\Image X=\pi(\g{h})(\C^n)=\g{h}_0^\perp$.

Let $X\in\pi(\g{h})$ and ${v}\in V$.
Since $\g{h}$ is abelian, we have $0=[X+\xi(X),{v}]=X{v}$. Hence, $V\subset\g{h}_0$.
If $X$, $Y\in\pi(\g{h})$, commutativity implies $0=[X+\xi(X),\, Y+\xi(Y)]=X\xi(Y)-Y\xi(X)$.

\begin{lemma}\label{lemma:c}
	There exists ${c}\in\C^n$ such that $\xi(X)-X{c}\in\g{h}_0$ for all $X\in\pi(\g{h})$.
\end{lemma}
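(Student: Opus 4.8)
The plan is to reformulate the conclusion as a statement about orthogonal projections and then "integrate" the family $\{\xi(X)\}_{X\in\pi(\g{h})}$ into a single vector. Let $P\colon\C^n\to\g{h}_0^\perp$ denote the orthogonal projection. Since $Xc\in\Image X\subset\pi(\g{h})(\C^n)=\g{h}_0^\perp$ for every $X\in\pi(\g{h})$, and $\g{h}_0^\perp$ is by definition the orthogonal complement of $\g{h}_0$, the desired condition $\xi(X)-Xc\in\g{h}_0$ is equivalent to the equality $P\xi(X)=Xc$. Hence the task is to find one vector $c$ whose image under each $X$ recovers the $\g{h}_0^\perp$-component of $\xi(X)$ simultaneously. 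If $\pi(\g{h})=0$ there is nothing to prove (take $c=0$), so I assume $\pi(\g{h})\neq 0$.

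First I would fix a regular element $X_0\in\pi(\g{h})$, which exists because the regular elements form an open dense subset of $\pi(\g{h})$. For such $X_0$ one has $\Ker X_0=\g{h}_0$ and $\Image X_0=\g{h}_0^\perp$, so the restriction $X_0|_{\g{h}_0^\perp}\colon\g{h}_0^\perp\to\g{h}_0^\perp$ is a linear isomorphism. I then define
\[
c=\bigl(X_0|_{\g{h}_0^\perp}\bigr)^{-1}\!\bigl(P\xi(X_0)\bigr)\in\g{h}_0^\perp,
\]
so that $X_0c=P\xi(X_0)$ by construction, which already yields $\xi(X_0)-X_0c\in\g{h}_0$. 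The core step is to propagate this to an arbitrary $X\in\pi(\g{h})$ by means of the commutativity relation $X\xi(X_0)=X_0\xi(X)$ established above, together with the fact that every element of $\pi(\g{h})$ annihilates $\g{h}_0=\bigcap_{Y}\Ker Y$ (so $Y\xi(W)=YP\xi(W)$ for all $Y$, $W$). Concretely, using in turn the commutativity of $X$ and $X_0$ in the abelian algebra $\pi(\g{h})$, the definition of $c$, the vanishing of $X$ on $\g{h}_0$, the relation $X\xi(X_0)=X_0\xi(X)$, and the vanishing of $X_0$ on $\g{h}_0$, I compute
\[
X_0(Xc)=X(X_0c)=X\,P\xi(X_0)=X\xi(X_0)=X_0\xi(X)=X_0\,P\xi(X).
\]
Since both $Xc$ and $P\xi(X)$ lie in $\g{h}_0^\perp$, where $X_0$ is injective, I conclude $Xc=P\xi(X)$, i.e.\ $\xi(X)-Xc\in\g{h}_0$, as required.

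The main obstacle is exactly this passage from a single regular element to the whole family: there is no a priori reason why the vector $c$ manufactured from $X_0$ should be compatible with every other $X\in\pi(\g{h})$. The commutativity relation $X\xi(X_0)=X_0\xi(X)$ is what forces compatibility and makes the construction essentially independent of the chosen regular element, while the injectivity of $X_0$ on $\g{h}_0^\perp$ is what allows the final cancellation of $X_0$. I expect the only subtlety to be bookkeeping the two ways in which $\g{h}_0$ is absorbed (once through $\Image X\subset\g{h}_0^\perp$, once through $\Ker=\g{h}_0$), and noting that $c$ is thereby determined only up to an element of $\g{h}_0$, which is harmless for the statement.
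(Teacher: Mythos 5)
Your proof is correct and follows essentially the same route as the paper: fix a regular element of $\pi(\g{h})$, solve for $c$ using the decomposition $\C^n=\g{h}_0\oplus\g{h}_0^\perp=\ker X_0\oplus\Image X_0$, and propagate to arbitrary $X$ via the relations $X\xi(X_0)=X_0\xi(X)$ and $XX_0=X_0X$. The paper merely phrases the last step more directly, showing $X_0(\xi(X)-Xc)=0$ hence $\xi(X)-Xc\in\ker X_0=\g{h}_0$, rather than introducing the projection $P$ and cancelling $X_0$ by injectivity on $\g{h}_0^\perp$; the two computations are the same argument in different clothing.
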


\begin{proof}
	Let $X\in\pi(\g{h})$ be a regular element.
	Since $\C^n=(\ker X)\oplus(\Image X)$, there exists ${c}\in\C^n$ such that $\xi(X)-X{c}\in\ker X=\g{h}_0$.
	Now, let $Y\in\pi(\g{h})$ be arbitrary.
	Since $\pi(\g{h})$ is abelian,
	\[
	0=X\xi(Y)-Y\xi(X)=X\xi(Y)-YX{c}=X(\xi(Y)-Y{c}),
	\]
	and thus, $\xi(Y)-Y{c}\in\ker X=\g{h}_0$, as we wanted to show.
\end{proof}

\begin{proposition}
	We have $\g{h}=V$, and hence the action of $H$ on $\C^n$ is by translations by vectors in a Lagrangian subspace of $\C^n$.
\end{proposition}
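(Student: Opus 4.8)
The plan is to squeeze out the whole conclusion from a single, well-chosen orbit, namely the one through the point $-c$, where $c$ is the vector produced by Lemma~\ref{lemma:c}. The key observation is that the tangent space to this orbit is trapped inside $\g{h}_0$. Indeed, a general element of $\g{h}$ has the form $X+\xi(X)+v$ with $X\in\pi(\g{h})$ and $v\in V$, and its fundamental vector field at $-c$ equals $(\xi(X)-Xc)+v$; hence the tangent space to the $H$-orbit through $-c$ is
\[
\mathcal{T}=\{\,\xi(X)-Xc : X\in\pi(\g{h})\,\}+V.
\]
By Lemma~\ref{lemma:c} each $\xi(X)-Xc$ lies in $\g{h}_0$, and we already know that $V\subset\g{h}_0$, so $\mathcal{T}\subseteq\g{h}_0$.

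The second ingredient I would record is that $\g{h}_0$ is a complex subspace of $\C^n$ on which $\omega$ stays nondegenerate. Every $X\in\pi(\g{h})\subset\g{u}(n)$ is complex linear, hence commutes with $J$, so each $\ker X$ is $J$-invariant and therefore so is their intersection $\g{h}_0=\bigcap_{X\in\pi(\g{h})}\ker X$. For $w\in\g{h}_0\setminus\{0\}$ we have $Jw\in\g{h}_0$ and $\omega(Jw,w)=\langle Jw,Jw\rangle=\langle w,w\rangle>0$, so $(\g{h}_0,\omega\vert_{\g{h}_0})$ is a symplectic vector space of real dimension $2\dim_\C\g{h}_0$.

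Now I would combine the two facts by a dimension count. By hypothesis every orbit is Lagrangian, so $\mathcal{T}$ is an isotropic subspace of $(\C^n,\omega)$ with $\dim_\R\mathcal{T}=n$. Since $\mathcal{T}\subseteq\g{h}_0$, it is in particular isotropic in $(\g{h}_0,\omega\vert_{\g{h}_0})$, and an isotropic subspace of a symplectic space cannot exceed half its dimension; therefore
\[
n=\dim_\R\mathcal{T}\le\tfrac{1}{2}\dim_\R\g{h}_0\le\tfrac{1}{2}\dim_\R\C^n=n.
\]
Equality throughout forces $\g{h}_0=\C^n$, which means that every $X\in\pi(\g{h})$ vanishes, i.e.\ $\pi(\g{h})=0$ and $\g{h}=V$. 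Consequently $H$ acts by translations, and the tangent space $V$ of each orbit, being Lagrangian, shows that $V$ is a Lagrangian subspace of $\C^n$; the orbit foliation is thus the Lagrangian affine subspace foliation, as claimed.

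The only genuinely delicate step is the choice of base point: testing the Lagrangian condition at $-c$ (rather than at an arbitrary point) is precisely what confines $\mathcal{T}$ to the complex subspace $\g{h}_0$, and everything else is routine bookkeeping of fundamental vector fields and of the isotropy/Lagrangian dimension inequality. The conceptual content is the incompatibility between being maximal isotropic and being contained in a proper complex subspace, which is what pins down $\g{h}_0=\C^n$.
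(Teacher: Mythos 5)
Your proof is correct, and it takes a genuinely different route from the paper's in its decisive step. Both arguments share the same opening move (the paper conjugates $\g{h}$ by $(\Id,c)$ and looks at the orbit through the origin, which is the same as your evaluation of fundamental vector fields at $-c$), and both rely on Lemma~\ref{lemma:c} together with $V\subset\g{h}_0$ to trap the tangent space of that orbit inside $\g{h}_0$. After that the two proofs diverge. The paper never uses that $\pi(\g{h})\subset\g{u}(n)$; it treats the rotational parts merely as commuting skew-symmetric endomorphisms in $\g{so}(2n)$, extracts the extra information $\C V\subset\g{h}_0$ by testing the totally real condition at a \emph{second} point $p$ with $Xp\in\g{h}_0^\perp$, and then closes the argument with a rank estimate: an abelian subalgebra of $\g{so}(\g{h}_0^\perp)$ has dimension at most $\frac{1}{2}\dim\g{h}_0^\perp$, which combined with the dimension bookkeeping forces $\g{h}_0^\perp=0$. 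You instead exploit holomorphy at the linear-algebra level: since $\pi(\g{h})\subset\g{u}(n)$ commutes with $J$, the subspace $\g{h}_0=\bigcap_{X\in\pi(\g{h})}\ker X$ is complex, hence symplectic, and then the single inequality ``isotropic $\le$ half the dimension'' applied to the $n$-dimensional isotropic subspace $\mathcal{T}\subseteq\g{h}_0$ immediately yields $\g{h}_0=\C^n$, i.e.\ $\pi(\g{h})=0$. Your version is shorter, needs the Lagrangian hypothesis at only one orbit (within this proposition), and dispenses with the regular-element/$\Image X$ discussion and the $\g{so}$-rank bound; what the paper's version buys is that it leans only on skew-symmetry of the rotational parts plus the totally real condition, a style of argument (following Di Scala's) that does not require identifying $\g{h}_0$ as a complex subspace. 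Both are complete proofs of the statement.
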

\begin{proof}
If $X\in\pi(\g{h})$, then $\Ad(\Id,{c})(X+\xi(X))=X-X{c}+\xi(X)$.
Moreover $\Ad(\Id,{c})(V)=V\subset\g{h}_0$.
Therefore, if we define  
\[
\g{w}=\xi(\pi(\g{h}))\oplus V,
\]
by Lemma~\ref{lemma:c} and after conjugation by $(\Id,{c})$, we can assume $\g{w}\subset\g{h}_0$.

The tangent space of the orbit of $H$ through the origin is precisely $T_{0}(H\cdot{0})=\g{w}$.
By hypothesis, this orbit is Lagrangian in $\C^n$. This means that $\g{w}\cong\R^n$ is a Lagrangian subspace of $\C^n=\g{w}\oplus i\g{w}$.

Let $v\in V$ and $X\in\pi(\g{h})$ be a regular element.
We consider $q\in\g{h}_0^\perp=\Image X$.
Then, there exists $p\in\C^n$ such that $q=Xp$.
The fact that $T_{p}(H\cdot p)$ and $\g{w}$ are totally real, implies
$\langle iv,q\rangle=\langle iv,Xp\rangle=\langle iv,Xp+\xi(X)\rangle=0$.
Hence, $i V$ is orthogonal to $\g{h}_0^\perp$, and thus, $\C V$, the complexification of $V$, satisfies $\C V\subset \g{h}_0$.
Therefore, $\C V\oplus\xi(\pi(\g{h}))\subset\g{h}_0$, and then,
$\dim\g{h}_0\geq 2\dim V+\dim\xi(\pi(\g{h}))$.

We have
\begin{align*}
2n
&{}=\dim\C^n=\dim\g{h}_0+\dim\g{h}_0^\perp
\geq 2\dim V+\dim\xi(\pi(\g{h}))+\dim\g{h}_0^\perp
\\
&{}=\dim\g{w}+\dim V+\dim\g{h}_0^\perp
=n+\dim V+\dim\g{h}_0^\perp,
\end{align*}
which implies $\dim\g{h}_0^\perp\leq n-\dim V$.
On the other hand, 
\[
\dim\pi(\g{h})
=\dim\xi(\pi(\g{h}))+\dim\ker\xi
\geq \dim\xi(\pi(\g{h}))=\dim\g{w}-\dim V=n-\dim V.
\]
The last two inequalities imply $\dim\g{h}_0^\perp\leq\dim\pi(\g{h})$.
Now recall that $\pi(\g{h})$ is isomorphic to an abelian Lie subalgebra of $\g{so}(\g{h}_0^\perp)$. Then,
\begin{align*}
\dim\g{h}_0^\perp
&{}\leq\dim\pi(\g{h})\leq\mathop{\rm rank}\g{so}(\g{h}_0^\perp)
\leq \frac{1}{2}\dim\g{h}_0^\perp.
\end{align*}
Hence, $\dim\g{h}_0^\perp=0$, which implies $\pi(\g{h})=0$.
Therefore $\g{h}=V=\g{w}\cong \R^n$, and $H$ acts on $\C^n$ by translations by vectors in a Lagrangian subspace $V$ of $\C^n$.
\end{proof}

\subsection{Homogeneous Lagrangian foliations on complex hyperbolic spaces}\label{subsec:chn_proof}\hfill

Let $H$ be a connected Lie subgroup of $G=\s{SU}(1,n)$ with Lie algebra $\g{h}$. Assume that $H$ induces a homogeneous Lagrangian foliation on $\CH^n$, $n\geq 2$. By Proposition~\ref{prop:abelian}, $\g{h}$ is abelian. In particular, it is solvable and, as such, it is contained in a maximal solvable subalgebra of $\g{g}$, which by definition is a Borel subalgebra of $\g{g}$.
Thus, there exists a Borel subalgebra $\g{b}$ containing $\g{h}$.
It turns out that Borel subalgebras of semisimple Lie algebras are well understood in terms of Cartan subalgebras (see~\cite{Mo61} and~\cite{Knapp}).
There is a Cartan decomposition $\g{g}=\g{k}\oplus\g{p}$ such that $\g{b}=\g{t}\oplus\tilde{\g{a}}\oplus\tilde{\g{n}}$, where $\g{t}\subset\g{k}$, $\tilde{\g{a}}\subset\g{p}$, and $\g{t}\oplus\tilde{\g{a}}$ is a Cartan subalgebra of $\g{g}$.
Moreover, $\tilde{\g{g}}_{\tilde{\lambda}}=\{X\in\g{g}:\ad(H)X=\tilde{\lambda}(H)X\text{ for all $H\in\tilde{\g{a}}$}\}$, $\tilde{\lambda}\in\tilde{\g{a}}^*$, is a root space with respect to $\tilde{\g{a}}$, $\tilde{\Sigma}^+$ is the set of positive roots with respect to a certain ordering in $\tilde{\g{a}}$, and thus
$\tilde{\g{n}}=\oplus_{\tilde{\lambda}\in\tilde{\Sigma}^+}\tilde{\g{g}}_{\tilde{\lambda}}$.
There are exactly two conjugacy classes of Cartan subalgebras in $\g{g}$. Indeed, since $\g{a}$ is abelian and $\CH^n$ has rank one, either $\tilde{\g{a}}=0$, or $\tilde{\g{a}}=\g{a}$ is a maximal abelian subspace of $\g{p}$.
If $\tilde{\g{a}}=0$, then $\g{b}=\g{t}$ is a maximal abelian subalgebra of $\g{k}$. In this case $H$ would be contained in $K$. Cartan's fixed point theorem implies that the compact Lie group $K$ fixes a point when acting on the Hadamard manifold $\CH^n$. But since $H\subset K$, $H$ would also have a fixed point, which is not possible because $H$ induces a foliation by assumption. 
Therefore $\tilde{\g{a}}=\g{a}$, which means that $\g{b}=\g{t}\oplus\g{a}\oplus\g{n}$ is a so-called maximally non-compact Borel subalgebra. In this case, $\g{t}\subset\g{k}_0$, and $\g{a}\oplus\g{n}$ coincides, up to conjugation, with the non-compact part of the Iwasawa decomposition described in \S\ref{subsec:chn}. Hence, we have proved

\begin{proposition}\label{prop:Borel}
If $H$ induces a Lagrangian foliation on $\CH^n$, then its Lie algebra $\g{h}$ is, up to conjugation, an abelian subalgebra of a maximally non-compact Borel subalgebra $\g{t}\oplus\g{a}\oplus\g{n}$ with respect to a Cartan decomposition $\g{g}=\g{k}\oplus\g{p}$, where $\g{a}$ is a maximal abelian subspace of $\g{p}$, and $\g{t}$ is a maximal abelian subalgebra of $\g{k}_0$.
\end{proposition}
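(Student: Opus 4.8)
The plan is to leverage the fact that an abelian Lie algebra is in particular solvable, together with the structure theory that controls solvable subalgebras of a semisimple Lie algebra via Borel (maximal solvable) subalgebras, which are themselves governed by Cartan subalgebras. First I would apply Proposition~\ref{prop:abelian} to deduce that $\g{h}$ is abelian, hence solvable, and therefore contained in some maximal solvable subalgebra $\g{b}$ of $\g{g}=\g{su}(1,n)$, that is, in a Borel subalgebra $\g{b}\supset\g{h}$.

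Next I would recall the standard description of Borel subalgebras (as in~\cite{Mo61} and~\cite{Knapp}): each such $\g{b}$ is attached to a Cartan subalgebra of the form $\g{t}\oplus\tilde{\g{a}}$, with $\g{t}\subset\g{k}$ and $\tilde{\g{a}}\subset\g{p}$ relative to a suitable Cartan decomposition $\g{g}=\g{k}\oplus\g{p}$, together with a choice of positivity, so that $\g{b}=\g{t}\oplus\tilde{\g{a}}\oplus\tilde{\g{n}}$ where $\tilde{\g{n}}$ is the sum of the positive restricted root spaces for $\tilde{\g{a}}$. The decisive algebraic input is the conjugacy classification of Cartan subalgebras. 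Since $\CH^n$ has rank one, $\g{a}$ is one-dimensional, and one finds exactly two conjugacy classes: the maximally compact one, for which $\tilde{\g{a}}=0$ and hence $\g{b}=\g{t}$ lies entirely inside $\g{k}$; and the maximally non-compact one, for which $\tilde{\g{a}}=\g{a}$ and $\g{b}=\g{t}\oplus\g{a}\oplus\g{n}$.

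The remaining step is to exclude the purely compact case $\tilde{\g{a}}=0$, and here I would argue by a fixed-point principle. If $\g{h}\subset\g{b}=\g{t}\subset\g{k}$, then $H$ is contained in the maximal compact subgroup $K$. As $\CH^n$ is a Hadamard manifold and $K$ is compact, Cartan's fixed point theorem provides a point fixed by all of $K$, hence by $H$. But a group fixing a point cannot induce a foliation, since its orbit through that point is a single point; this contradicts the hypothesis that $H$ induces a Lagrangian foliation. Therefore $\tilde{\g{a}}=\g{a}$ and $\g{b}$ is maximally non-compact. Finally, since $\g{t}\subset\g{k}$ commutes with $\g{a}$, it centralizes $\g{a}$ in $\g{k}$, so $\g{t}\subset\g{k}_0$; and the requirement that $\g{t}\oplus\g{a}$ be a genuine Cartan (maximal abelian) subalgebra forces $\g{t}$ to be maximal abelian in $\g{k}_0$, which is the assertion.

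I expect the main obstacle to be the conjugacy classification of Cartan subalgebras and the concomitant identification of the two admissible forms of $\g{b}$ in the rank one case; once this structure theory is invoked, the geometric heart of the argument reduces to the clean fixed-point exclusion of the compact branch, and the final identification of $\g{t}$ as maximal abelian in $\g{k}_0$ is then immediate.
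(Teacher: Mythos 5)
Your proof is correct and follows essentially the same route as the paper: Proposition~\ref{prop:abelian} gives that $\g{h}$ is abelian hence solvable, one invokes the description of Borel subalgebras via Cartan subalgebras (same references), the rank-one condition yields exactly two conjugacy classes $\tilde{\g{a}}=0$ or $\tilde{\g{a}}=\g{a}$, and Cartan's fixed point theorem rules out the maximally compact branch since a fixed point is incompatible with a Lagrangian foliation. The only difference is cosmetic: you spell out why $\g{t}$ must be maximal abelian in $\g{k}_0$ (maximality of the Cartan subalgebra $\g{t}\oplus\g{a}$), a point the paper leaves implicit.
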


We now study the projection of $\g{h}\subset \g{t}\oplus\g{a}\oplus\g{n}$ onto $\g{a}\oplus\g{n}$, which we denote by $\g{h}_{\g{a}\oplus\g{n}}$.
Since $H$ induces a Lagrangian foliation, we get that $T_o(H\cdot o)\cong \g{h}_{\g{a}\oplus\g{n}}$ is a Lagrangian subspace of $\g{a}\oplus\g{n}\cong \C^{n}$, where $o\in\CH^n$ is the fixed point of $K$.
As $\g{h}\cap\g{t}$ is the kernel of the orthogonal projection of $\g{h}$ onto $\g{a}\oplus\g{n}$, the first isomorphism theorem yields
\begin{equation}\label{eq:dims}
\dim\g{h}=\dim(\g{h}\cap\g{t})+\dim\g{h}_{\g{a}\oplus\g{n}}=\dim(\g{h}\cap\g{t})+n.
\end{equation}
Now we consider the following linear map
\[
T\colon \g{h}_{\g{a}\oplus\g{n}}\to\g{t}\ominus(\g{h}\cap\g{t}),\quad X\mapsto T_X,
\]
defined by the requirement $T_X+X\in\g{h}$.
Here $\g{t}\ominus(\g{h}\cap\g{t})$ represents the orthogonal complement of $\g{h}\cap\g{t}$ in $\g{t}$ with respect to $\cal{B}_\theta$.
This map is well-defined because $T_X+X$, $T_X'+X\in\g{h}$ implies $T_X-T_X'=(T_X+X)-(T_X'+X)\in\g{h}\cap\g{t}$, and so, $T_X=T_X'$.
It is also easy to see that $T$ is linear.

We now distinguish two cases: the projection of $\g{h}$ onto $\g{a}\oplus\g{g}_{2\alpha}$ is either $\g{a}\oplus\g{g}_{2\alpha}$, or strictly contained in $\g{a}\oplus\g{g}_{2\alpha}$.

\subsection*{Case (i): the orthogonal projection of $\g{h}$ onto $\g{a}\oplus\g{g}_{2\alpha}$ is surjective}
In this case we have $\g{h}_{\g{a}\oplus\g{n}}=\R(B+X) \oplus \g{w} \oplus \R(Y+Z)$, with $\g{w}$ a subspace of $\g{g}_\alpha$, and $X$, $Y\in\g{g}_\alpha\ominus\g{w}$. Our goal will be to show that this case is not possible.

From now on, the symbol $\ominus$ denotes orthogonal complement with respect to the inner product $\langle\cdot,\cdot\rangle$ on $\g{a}\oplus\g{n}$.

\begin{lemma}
	We have $\g{h}\cap\g{t}=0$, $\dim \g{h}=n$, and $X\neq 0\neq Y$ with
	\[
	X=\gamma Y + \frac{1}{\lVert Y\rVert^2}JY.
	\]
\end{lemma}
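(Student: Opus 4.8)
The plan is to extract all consequences of the Lagrangian hypothesis on $\han$ first, deriving the relation between $X$ and $Y$ together with $X\neq0\neq Y$, and only afterwards to use the full abelianness of $\g{h}$ to kill the toral part $\g{h}\cap\g{t}$. Throughout I would work inside $\g{a}\oplus\g{n}$ with $\omega(U,V)=\langle U,JV\rangle$, using $JB=Z$, $JZ=-B$, the $J$-invariance of $\g{g}_\alpha$, and the mutual orthogonality of $\g{a}$, $\g{g}_\alpha$, $\g{g}_{2\alpha}$.

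First I would impose that $\han=\R(B+X)\oplus\g{w}\oplus\R(Y+Z)$ is isotropic. Pairing the two distinguished generators gives $0=\omega(B+X,Y+Z)=\langle B+X,\,JY-B\rangle=\langle X,JY\rangle-1$, hence $\langle X,JY\rangle=1$; in particular $X\neq0\neq Y$. Pairing each of $B+X$, $Y+Z$ against every $w\in\g{w}$ gives $\omega(X,w)=\omega(Y,w)=0$, that is $X,Y\perp J\g{w}$, while pairing $\g{w}$ with itself shows that $\g{w}$ is isotropic in $\g{g}_\alpha$.

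Next, a dimension count should force the stated identity. Since $\han$ is Lagrangian, $\dim\han=n$ and therefore $\dim\g{w}=n-2$; isotropy of $\g{w}$ gives $\g{w}\cap J\g{w}=0$, so $\g{w}\oplus J\g{w}$ has real codimension $2$ in $\g{g}_\alpha$ and its orthogonal complement $\ell$ is a complex line. As $X,Y\perp\g{w}$ (from $X,Y\in\g{g}_\alpha\ominus\g{w}$) and $X,Y\perp J\g{w}$ (previous step), both $X$ and $Y$ lie in $\ell$. Writing $X=\gamma Y+\delta JY$ with $\gamma,\delta\in\R$ (legitimate since $Y\neq0$ spans $\ell$ over $\C$) and using $\langle Y,JY\rangle=0$ and $\langle JY,JY\rangle=\lVert Y\rVert^2$, the normalization $\langle X,JY\rangle=1$ yields $\delta=1/\lVert Y\rVert^2$, which is precisely the claimed formula.

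Finally I would show $\g{h}\cap\g{t}=0$, from which $\dim\g{h}=n$ is immediate by \eqref{eq:dims}. Let $T\in\g{h}\cap\g{t}$ and let $T_1+B+X$, $T_2+Y+Z$, $T_w+w\in\g{h}$ be the lifts of the generators under the map $X\mapsto T_X$. Since $\g{h}$ is abelian these lifts commute with $T$, and since $T$ centralizes $\g{t}$, $\g{a}$ and $\g{g}_{2\alpha}$, the resulting identities reduce to $\ad(T)X=\ad(T)Y=\ad(T)w=0$. By Lemma~\ref{lemma:adJ}, $\ad(T)$ commutes with $J$ on $\g{g}_\alpha$, so it also annihilates $JX$, $JY$ and $J\g{w}$; as $\spann\{X,JX\}=\ell$ and $\g{w}\oplus J\g{w}$ together span $\g{g}_\alpha$, we obtain $\ad(T)\vert_{\g{g}_\alpha}=0$. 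The crux is to deduce $T=0$ from this, where the structure theory enters: $\g{k}_0\cong\g{u}(n-1)$ acts on $\g{g}_\alpha\cong\C^{n-1}$ by its standard, hence faithful, representation, so the restriction of $\ad(\cdot)\vert_{\g{g}_\alpha}$ to the torus $\g{t}$ is injective and $T=0$. I expect this faithfulness, rather than the largely computational steps above, to be the point that requires the most care.
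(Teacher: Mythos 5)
Your proposal is correct and follows essentially the same route as the paper's proof: both extract the relations $\langle X,JY\rangle=1$ (hence $X\neq 0\neq Y$) and $X,Y\perp\C\g{w}$ from the Lagrangian condition on $\g{h}_{\g{a}\oplus\g{n}}$, pin down $X$ inside the complex line $\C Y$ by the same dimension count, and then kill $\g{h}\cap\g{t}$ via abelianness of $\g{h}$, Lemma~\ref{lemma:adJ}, and the effectiveness of the $K_0$-action on $\g{g}_\alpha$. The only cosmetic difference is that you phrase the dimension count through the orthogonal complement $\ell$ of $\g{w}\oplus J\g{w}$, whereas the paper states $\g{w}\subset\g{g}_\alpha\ominus(\C X+\C Y)$ and deduces $\C X=\C Y$ — the same argument in dual form.
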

\begin{proof}
Recall that $\g{h}_{\g{a}\oplus\g{n}}$ is Lagrangian in $\g{a}\oplus\g{n}$.
Hence, $\g{w}$ is totally real in $\g{g}_\alpha\cong\C^{n-1}$, and for each $U\in\g{w}$ we have
$0=\langle J(B+X), U \rangle =\langle JX, U \rangle$ and $0= \langle J(Y+Z), U \rangle =\langle JY, U \rangle$. This implies $\g{w} \subset \g{g}_{\alpha}\ominus (\C X + \C Y)$. Moreover,
\begin{equation}\label{eq:JXY}
0=\langle J(B+X), Y+Z \rangle =1+\langle JX, Y\rangle,
\end{equation}
which in particular yields $X\neq 0\neq Y$. Thus, as $\dim\g{h}_{\g{a}\oplus\g{n}}=n$, we have $\C X= \C Y$, and $\g{w}$ is Lagrangian in $\g{g}_\alpha\ominus\C Y\cong\C^{n-2}$.
From \eqref{eq:JXY}, we can write
$X=\gamma Y + \frac{1}{\lVert Y\rVert^2}JY$,
for~some~$\gamma \in \R$.

Let $U\in\g{w}$.
If $S\in\g{h}\cap\g{t}$ is arbitrary, and taking into account that $\g{h}$ is abelian, we get
$0=[S,T_U+U]=[S,U]$ and $0=[S,T_{Y+Z}+Y+Z]=[S,Y]$.
Using Lemma~\ref{lemma:adJ}, this implies $0=[\g{h}\cap\g{t},\C\g{w}\oplus\C Y]=[\g{h}\cap\g{t},\g{g}_\alpha]$.
Since the connected Lie subgroup $K_0\cong \s{U}(n-1)$ of $G$ with Lie algebra $\g{k}_0\cong\g{u}(n-1)$ acts effectively on $\g{g}_\alpha\cong\C^{n-1}$, we must have $\g{h}\cap\g{t}=0$.
In particular, $\dim\g{h}=n$ by~\eqref{eq:dims}.
\end{proof}

In order to simplify notation we write
\[
T_B=T_{B+\gamma Y+\frac{1}{\lVert Y\rVert^2}JY}\quad \text{ and }\quad
T_Z=T_{Y+Z}.
\]

\begin{lemma}\label{lemma:TZJY}
	We have:
	\begin{equation*}
		[T_Z,JY]=\frac{\lVert Y\rVert^2}{2}Y.
	\end{equation*}
\end{lemma}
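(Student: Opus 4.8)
The plan is to read off the identity from the commutativity of $\g{h}$ evaluated on the distinguished generators. Since $\g{h}\cap\g{t}=0$ and $\dim\g{h}=n$, the algebra $\g{h}$ is the graph of $T$, so it is spanned by $T_B+(B+X)$, $T_Z+(Y+Z)$, and the elements $T_U+U$ with $U\in\g{w}$. First I would expand $[T_B+(B+X),\,T_Z+(Y+Z)]=0$ using the Heisenberg bracket of $\g{a}\oplus\g{n}$ together with the facts that $\g{t}\subset\g{k}_0$ is abelian, centralizes $\g{a}$ and $\g{g}_{2\alpha}$, and normalizes $\g{g}_\alpha$. The only $\g{g}_{2\alpha}$-contribution is $(1+\langle JX,Y\rangle)Z$, which vanishes by~\eqref{eq:JXY}, so the bracket lies in $\g{g}_\alpha$ and becomes
\begin{equation*}
[T_B,Y]-[T_Z,X]+\tfrac{1}{2}Y=0 \qquad (\ast).
\end{equation*}
Substituting $X=\gamma Y+\frac{1}{\lVert Y\rVert^2}JY$ and invoking $[T_Z,JY]=J[T_Z,Y]$ from Lemma~\ref{lemma:adJ}, relation~$(\ast)$ expresses $[T_Z,JY]$ through $[T_Z,Y]$, $[T_B,Y]$, and $Y$.

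I would then pin down $[T_Z,JY]\in\g{g}_\alpha$ in the orthogonal decomposition $\g{g}_\alpha=\C Y\oplus\g{w}\oplus J\g{w}$. Pairing $(\ast)$ with $Y$ annihilates $[T_B,Y]$ and $[T_Z,Y]$ by skew-symmetry of $\ad(T_B)$ and $\ad(T_Z)$, giving $\langle[T_Z,JY],Y\rangle=\frac{1}{2}\lVert Y\rVert^4$, so the $\C Y$-part is exactly $\frac{\lVert Y\rVert^2}{2}Y$ (its $JY$-component is $\langle[T_Z,JY],JY\rangle=\langle[T_Z,Y],Y\rangle=0$). For the remaining directions I bring in $[T_Z+(Y+Z),\,T_U+U]=0$; its $\g{g}_{2\alpha}$-part vanishes since $JY\perp\g{w}$, leaving $[T_Z,U]=[T_U,Y]$. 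This yields $[T_Z,Y]\perp\g{w}$, hence the $J\g{w}$-component of $[T_Z,JY]$ is $\langle[T_Z,JY],JU\rangle=\langle[T_Z,Y],U\rangle=0$.

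The main obstacle is the $\g{w}$-component, i.e.\ showing $\langle[T_U,Y],JY\rangle=0$ for all $U\in\g{w}$, since any single commutator relation only reproduces this quantity tautologically. The decisive idea is to couple the three relations: expanding $[T_B+(B+X),\,T_U+U]=0$ gives $[T_U,X]=[T_B,U]+\frac{1}{2}U$, into which I substitute $[T_U,Y]=[T_Z,U]$ (and $[T_U,JY]=J[T_U,Y]$). Pairing the outcome with $Y$, and rewriting both $[T_B,U]$ and $[T_B,Y]$ via $(\ast)$ and the skew-symmetry of $\ad(T_B)$ and $\ad(T_Z)$, the quantity $\langle[T_U,Y],JY\rangle$ is forced to equal its own negative and therefore vanishes. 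Once all four components are determined, we conclude $[T_Z,JY]=\frac{\lVert Y\rVert^2}{2}Y$.
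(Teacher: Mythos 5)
Your proposal is correct and follows essentially the same route as the paper: both proofs expand the commutators $[T_B+(B+X),\,T_Z+(Y+Z)]$, $[T_B+(B+X),\,T_U+U]$, and $[T_U+U,\,T_Z+(Y+Z)]$, pair them against $Y$, $U$, and $JY$, and combine the resulting three scalar relations so that the troublesome quantity $\langle[T_U,Y],JY\rangle=-\langle[T_Z,JY],U\rangle$ equals its own negative, with the $\C Y$-component then read off from the same pairing with $Y$. The only difference is organizational — you keep vector identities and decompose along $\C Y\oplus\g{w}\oplus J\g{w}$, whereas the paper works directly with the scalar products — so the mathematical content coincides.
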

\begin{proof}
Let $U\in \g{w}$.
The fact that $\g{h}$ is abelian yields
$0=[T_U+U, T_Z+Y+Z]=[T_U,Y]-[T_Z, U]$, and taking inner product with $Y$ we get $\langle [T_Z,JY],JU\rangle=\langle [T_Z, Y], U \rangle=0$. Using this, together with the facts that $\g{h}$ is abelian and $Y\in\g{g}_\alpha\ominus\C\g{w}$, we obtain
\begin{align*}
0&{}=\langle \Bigl[T_B+B+\gamma Y + \frac{1}{\lVert Y\rVert^2}JY,\, T_U+U\Bigr],\, Y\rangle
=-\langle [T_B, Y], U \rangle-\frac{1}{\lVert Y\rVert^2} \langle [T_U,JY],\, Y \rangle,
\\
0&{}=\langle \Bigl[T_B+B+\gamma Y + \frac{1}{\lVert Y\rVert^2}JY,\, T_Z+Y+Z\Bigr], U \rangle
=\langle [T_B, Y], U \rangle -\frac{1}{\lVert Y\rVert^2} \langle [T_Z, JY], U \rangle,
\\[1ex]
0&{}=\langle [T_U+U,\,T_Z+Y+Z],\, JY\rangle
=-\langle [T_U,JY],Y\rangle+\langle [T_Z,JY],U\rangle,
\end{align*}
which imply $\langle [T_Z, JY], U \rangle=\langle [T_U,JY],Y\rangle=\langle [T_B,Y],U\rangle=0$.
Similarly,
\begin{align*}
0&{}=\langle \Bigl[T_B+B+\gamma Y + \frac{1}{\lVert Y\rVert^2}JY,\, T_Z+Y+Z\Bigr],\, Y \rangle
=\frac{1}{2}\lVert Y\rVert^2-\frac{1}{\lVert Y\rVert^2}\langle [T_Z,JY],Y\rangle
\end{align*}
yields $\langle [T_Z, JY], Y \rangle =\lVert Y\rVert^4/2$.
These calculations imply the result.
\end{proof}

Let $g=\Exp(2JY/\lVert Y\rVert^2)\in AN$. We estimate the dimension of $(\Ad(g)\g{h})_{\g{a}\oplus\g{n}}$. Using the formula for the brackets in $\g{a}\oplus\g{n}$, together with Lemma~\ref{lemma:TZJY}, we get
\begin{align*}
&\Ad(g)(T_Z+Y+Z)
=T_Z+Y+Z-\frac{2}{\lVert Y\rVert^2}[T_Z,JY]-2Z+\frac{2}{\lVert Y\rVert^4}\langle [T_Z,JY],Y\rangle Z=T_Z,
\end{align*}
which has trivial projection onto $\g{a}\oplus\g{n}$.
Since $\dim\Ad(g)\g{h}=\dim\g{h}=n$, we deduce that $\dim(\Ad(g)\g{h})_{\g{a}\oplus\g{n}}\leq n-1$.
But $(\Ad(g)\g{h})_{\g{a}\oplus\g{n}}$ is isomorphic to the tangent space of the orbit of $H$ through $g^{-1}(o)$, so we get an orbit whose dimension is not $n$; in particular, it is not Lagrangian. Case (i) is therefore impossible.

\subsection*{Case (ii): the orthogonal projection of $\g{h}$ onto $\g{a}\oplus\g{g}_{2\alpha}$ is not surjective}
In this case we have $\g{h}_{\g{a}\oplus\g{n}}=\R(aB+X+xZ) \oplus \g{w}$ with $a,x\in\R$, $\g{w}$ a subspace of $\g{g}_\alpha$, and $X\in\g{g}_\alpha\ominus\g{w}$. Our objective will be to show that $\g{h}=\g{w}\oplus\g{g}_{2\alpha}$ for a Lagrangian subspace $\g{w}$ of $\g{g}_\alpha$, from where the Main Theorem will follow. We start with the following lemma.

\begin{lemma}
	We have $\g{h}_{\g{a}\oplus\g{n}}=\g{w}\oplus\g{g}_{2\alpha}$, where $\g{w}$ is a Lagrangian subspace of $\g{g}_\alpha$.
\end{lemma}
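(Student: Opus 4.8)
The plan is to pin down each of the three components of the generator $aB+X+xZ$ by combining the Lagrangian condition with the abelian hypothesis. First I would exploit total reality of $\g{h}_{\g{a}\oplus\g{n}}$. Pairing the generator against an arbitrary $U\in\g{w}$ and using $JB=Z$, $J\g{a}=\g{g}_{2\alpha}$, and the $J$-invariance of $\g{g}_\alpha$, the only surviving term is $\langle JX,U\rangle=0$; total reality within $\g{w}$ itself shows that $\g{w}$ is totally real in $\g{g}_\alpha\cong\C^{n-1}$. Since $aB+X+xZ\neq 0$ (otherwise $\g{h}_{\g{a}\oplus\g{n}}=\g{w}$ would be an $n$-dimensional totally real subspace of $\C^{n-1}$, which is impossible), the count $\dim\g{h}_{\g{a}\oplus\g{n}}=n$ forces $\dim\g{w}=n-1$, so $\g{w}$ is in fact Lagrangian in $\g{g}_\alpha$.

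Next I would use that a Lagrangian $\g{w}$ gives the orthogonal splitting $\g{g}_\alpha=\g{w}\oplus J\g{w}$, so that $\g{g}_\alpha\ominus\g{w}=J\g{w}$ and hence $X\in J\g{w}$. The relation $\langle JX,U\rangle=0$ for all $U\in\g{w}$ says $JX\perp\g{w}$, i.e. $JX\in J\g{w}$, i.e. $X\in\g{w}$; together with $X\in J\g{w}$ and $\g{w}\cap J\g{w}=0$ this yields $X=0$. Thus the generator is genuinely $aB+xZ$.

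The crux is the third step, where I would bring in the abelian hypothesis to kill $a$. Writing $S:=T_{aB+xZ}\in\g{t}\subset\g{k}_0$, I would expand $0=[T_U+U,\,S+aB+xZ]$ for $U\in\g{w}$. Using that $\g{t}$ is abelian, that $\g{k}_0$ centralizes $\g{a}$ and $\g{g}_{2\alpha}$ and normalizes $\g{g}_\alpha$, and the bracket formula of \S\ref{subsec:chn} (which gives $[U,aB+xZ]=-\tfrac{a}{2}U$), every term collapses and one is left with $-\ad(S)U-\tfrac{a}{2}U=0$, that is, $\ad(S)U=-\tfrac{a}{2}U$ on all of $\g{w}$. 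Since $\ad(S)\vert_{\g{g}_\alpha}$ is skew-symmetric with respect to $\langle\cdot,\cdot\rangle$, it admits no nonzero real eigenvalue; because $\g{w}\neq 0$ (here $n\geq 2$ enters), the real number $-a/2$ must therefore vanish, so $a=0$. Finally $a=0$ together with $aB+xZ\neq 0$ forces $x\neq 0$, whence $\R(aB+xZ)=\R Z=\g{g}_{2\alpha}$ and $\g{h}_{\g{a}\oplus\g{n}}=\g{w}\oplus\g{g}_{2\alpha}$ with $\g{w}$ Lagrangian, as claimed.

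I expect the main obstacle to be precisely this third step: the Lagrangian condition alone leaves $a$ entirely unconstrained, since the self-pairing $\langle J(aB+xZ),aB+xZ\rangle$ vanishes automatically and the pairing of the generator against $\g{w}$ is insensitive to $a$. The vanishing of the $\g{a}$-component must instead be extracted from the interaction of the toroidal part $S=T_{aB+xZ}$ with $\g{w}$ forced by commutativity, and the decisive observation is that this interaction exhibits $-a/2$ as a \emph{real} eigenvalue of the skew-symmetric operator $\ad(S)\vert_{\g{g}_\alpha}$, which consequently must be zero.
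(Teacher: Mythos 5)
Your proposal is correct and follows essentially the same route as the paper's proof: total reality plus the dimension count to get $\g{w}$ Lagrangian and the generator nonzero, orthogonality of $X$ to both $\g{w}$ and $J\g{w}$ to kill $X$, and then commutativity exhibiting $-a/2$ as a real eigenvalue of the skew-symmetric operator $\ad(T_{aB+xZ})\vert_{\g{g}_\alpha}$ on the nonzero space $\g{w}$, forcing $a=0$ and hence $x\neq 0$. The only (cosmetic) difference is that the paper phrases the last step as a contradiction with $\g{w}\neq 0$ rather than as a direct eigenvalue argument.
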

\begin{proof}
Since $\g{h}_{\g{a}\oplus\g{n}}$ is a Lagrangian subspace of $\g{a}\oplus\g{n}\cong \C^{n}$, it is clear that $\g{w}$ is totally real in $\g{g}_\alpha\cong\C^{n-1}$.  But then, since $\dim\g{h}_{\g{a}\oplus\g{n}}=n$ by hypothesis, we need $\g{w}$ to be Lagrangian in $\g{g}_\alpha$ and $aB+X+xZ\neq 0$. Moreover, the Lagrangian condition implies that for each $U\in\g{w}$ we have $0=\langle J(aB+X+xZ),U\rangle=-\langle X,JU\rangle$, that is, $X$ is orthogonal to $J\g{w}$.
Since it is also orthogonal to $\g{w}$, and $\g{w}$ is Lagrangian in $\g{g}_\alpha$, we get $X=0$.
Thus, $aB+xZ\neq 0$.  Assume first that $a\neq 0$.
Let $U\in\g{w}$.
Since $\g{h}$ and $\g{t}\oplus\g{a}$ are abelian, $0=2[T_{aB+xZ}+aB+xZ,T_U+U]=2[T_{aB+xZ},U]+aU$.
Equivalently, $(2\ad(T_{aB+xZ})+a\Id)(U)=0$.
Since $\ad(T_{aB+xZ})$ is a skew-symmetric endomorphism of $\g{g}_\alpha$, $-a/2\in\R\setminus\{0\}$ cannot be one of its eigenvalues.
Therefore, $U=0$. Since $n\geq 2$ we would get $\g{w}=0$, which is a contradiction with the fact that $\g{w}$ is Lagrangian in $\g{g}_\alpha$.
All in all this means that $a=0$.
Hence, we get $x\neq 0$ and $xZ\in\g{h}_{\g{a}\oplus\g{n}}$.
Thus, $\g{h}_{\g{a}\oplus\g{n}}=\g{w}\oplus\g{g}_{2\alpha}$.
\end{proof}


\begin{proposition}
	If the orthogonal projection of $\g{h}$ onto $\g{a}\oplus\g{g}_{2\alpha}$ is strictly contained in $\g{a}\oplus\g{g}_{2\alpha}$, then $\g{h}=\g{w}\oplus\g{g}_{2\alpha}$, where $\g{w}$ is a Lagrangian subspace of $\g{g}_\alpha$.
\end{proposition}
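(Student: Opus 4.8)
The plan is to prove that the linear map $T$ vanishes identically and that $\g{h}\cap\g{t}=0$; once this is done, $\g{h}$ is the graph of $T$ over $\g{h}_{\g{a}\oplus\g{n}}=\g{w}\oplus\g{g}_{2\alpha}$, so $\g{h}=\g{w}\oplus\g{g}_{2\alpha}$ and the Main Theorem follows. Writing a generic element of $\g{h}$ as $T_U+U$ with $U\in\g{w}$, together with $T_Z+Z$ and the elements of $\g{h}\cap\g{t}$, I would first extract the consequences of commutativity. For $S\in\g{h}\cap\g{t}$ and $U\in\g{w}$ one gets $[S,U]=0$, and bracketing $T_Z+Z$ with $T_U+U$ gives $\ad(T_Z)U=0$. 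Since $\ad$ of an element of $\g{k}_0$ commutes with $J$ (Lemma~\ref{lemma:adJ}) and $\g{w}$ is Lagrangian, i.e.\ $\g{g}_\alpha=\g{w}\oplus J\g{w}$, these skew-symmetric operators annihilate all of $\g{g}_\alpha$; the faithfulness of the $K_0\cong\s{U}(n-1)$-action on $\g{g}_\alpha$ then forces $\g{h}\cap\g{t}=0$ and $T_Z=0$. It remains to prove $T_U=0$ for every $U\in\g{w}$.

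Commutativity among the elements $T_U+U$ yields two structural identities: $\ad(T_U)V=\ad(T_V)U$, and (after a short computation exploiting skew-symmetry of $\ad(T_U)$, which makes the trilinear form $\langle\ad(T_U)V,W\rangle$ symmetric in $U,V$ and antisymmetric in $V,W$) $\langle\ad(T_U)V,W\rangle=0$ for all $U,V,W\in\g{w}$, whence $\ad(T_U)\g{w}\subseteq J\g{w}$. Because the operators $\ad(T_U)$, $U\in\g{w}$, lie in the abelian family $\ad(\g{t})$ of $J$-linear skew-symmetric endomorphisms of $\g{g}_\alpha$, they are simultaneously diagonalizable. I would encode this invariantly through the self-adjoint, positive semidefinite, $J$-commuting operator
\begin{equation}\label{eq:Phi}
\Phi=-\sum_{i}\ad(T_{U_i})^2\colon\g{g}_\alpha\to\g{g}_\alpha,
\end{equation}
where $\{U_i\}$ is a basis of $\g{w}$. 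Its kernel $\g{g}_\alpha^0:=\Ker\Phi=\bigcap_{U}\Ker\ad(T_U)$ and image $\g{g}_\alpha^1:=\Image\Phi$ give a $J$-invariant orthogonal splitting $\g{g}_\alpha=\g{g}_\alpha^0\oplus\g{g}_\alpha^1$, and $T\neq0$ is equivalent to $\g{g}_\alpha^1\neq0$.

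The crux is to show $\g{w}\cap\g{g}_\alpha^1\neq0$ whenever $T\neq0$. In a simultaneous unitary eigenbasis $\{e_k\}$ write $\ad(T_U)e_k=f_k(U)Je_k$ with $f_k$ real-linear on $\g{w}$; the identity $\ad(T_U)V=\ad(T_V)U$ forces, on each index $k$ with $f_k\not\equiv0$ (these span $\g{g}_\alpha^1$), the $k$-th coordinate of $\g{w}$ to be a fixed complex multiple of the real functional $f_k$. A direct check then shows that the $\g{g}_\alpha^1$-part of the Kähler form restricted to $\g{w}$ vanishes, so the orthogonal projection of $\g{w}$ onto $\g{g}_\alpha^1$ is totally real; combined with the Lagrangian condition on $\g{w}$, this makes the projection of $\g{w}$ onto $\g{g}_\alpha^0$ totally real as well, hence of real dimension at most $\dim_\C\g{g}_\alpha^0$. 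Comparing with $\dim\g{w}=n-1=\dim_\C\g{g}_\alpha^0+\dim_\C\g{g}_\alpha^1$ gives $\dim(\g{w}\cap\g{g}_\alpha^1)\geq\dim_\C\g{g}_\alpha^1\geq1$. Picking $0\neq U\in\g{w}\cap\g{g}_\alpha^1$, the coordinate description shows that $U$ has no component in $\Ker\ad(T_U)$, so $U\in\Image\ad(T_U)$ and there is $V_0\in\g{g}_\alpha$ with $\ad(T_U)V_0=U$. Taking $g=\Exp(V_0)\in AN$ and expanding $\Ad(g)(T_{U'}+U')$ via the bracket of $\g{a}\oplus\g{n}$, its $\g{a}\oplus\g{n}$-component equals $U'-\ad(T_{U'})V_0$ modulo $\g{g}_{2\alpha}$; for our $U$ this vanishes, so $(\Ad(g)\g{h})_{\g{a}\oplus\g{n}}$ has dimension strictly less than $n$. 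As in Case (i), this space is isomorphic to the tangent space of the $H$-orbit through $g^{-1}(o)$, contradicting that all orbits are Lagrangian and hence $n$-dimensional. Therefore $T=0$ and $\g{h}=\g{w}\oplus\g{g}_{2\alpha}$.

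The main obstacle is conceptual rather than computational: commutativity alone does \emph{not} force $T=0$, since the symmetric identity $\ad(T_U)V=\ad(T_V)U$ is compatible with nonzero helicoidal elements $T_U+U$, and (as one checks) the orbit through $o$ together with the relations above never rules these out. The Lagrangian hypothesis must therefore be used at points \emph{other} than $o$, and the decisive geometric point is that a nonzero toroidal component produces an orbit of lower dimension along the ``axis'' of the corresponding helicoidal isometry. The technical heart of the argument is thus the total-reality of the projection of $\g{w}$ onto $\g{g}_\alpha^1$, where the interplay between the identity $\ad(T_U)V=\ad(T_V)U$ and the Lagrangian condition is essential, and the operator $\Phi$ in \eqref{eq:Phi} is what lets this be organized cleanly.
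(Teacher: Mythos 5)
Your proof is correct, and it shares the paper's skeleton at both ends but replaces its key middle device with a genuinely different one. Like the paper, you first get $\g{h}\cap\g{t}=0$ and $T_Z=0$ from commutativity plus faithfulness of the $\g{k}_0$-representation on $\g{g}_\alpha$, derive the identities $\ad(T_U)V=\ad(T_V)U$ and $\ad(T_U)\g{w}\subseteq J\g{w}$, and close with the same contradiction: a conjugation $\Ad(g)$ producing an element of $\g{t}\oplus\g{g}_{2\alpha}$ inside $\Ad(g)\g{h}$, so that the orbit through $g^{-1}(o)$ has dimension less than $n$. The divergence is in how the conjugating element is found. The paper introduces, for each $W\in\g{w}$, the operator $\Phi_W\colon\g{w}\to\g{w}$, $U\mapsto[JW,T_U]$, proves it is \emph{self-adjoint}, and invokes the spectral theorem: if some $\Phi_W\neq 0$ it has an eigenvector $V$ with eigenvalue $\lambda\neq 0$, and $g=\Exp(-\tfrac{1}{\lambda}JW)$ sends $T_V+V-\tfrac{1}{2\lambda}\langle W,V\rangle Z\in\g{h}$ to $T_V\in\g{t}$. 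You instead simultaneously diagonalize the commuting skew-Hermitian family $\{\ad(T_U)\vert_{\g{g}_\alpha}\}_{U\in\g{w}}$, split $\g{g}_\alpha=\g{g}_\alpha^0\oplus\g{g}_\alpha^1$ by the kernel/image of your positive semidefinite operator, and exploit the coordinate relation $f_k(U)v_k(V)=f_k(V)v_k(U)$ (which indeed forces $v_k=c_kf_k$ with $c_k\in\C$, hence total reality of the projection of $\g{w}$ onto $\g{g}_\alpha^1$); a \emph{second} use of the Lagrangian hypothesis plus a dimension count then yields $0\neq U\in\g{w}\cap\g{g}_\alpha^1$, whence $U\in\Image\ad(T_U)$ and $g=\Exp(V_0)$ with $\ad(T_U)V_0=U$ does the job (your terse final step is fine: since $\Ad(g)Z=Z$ and $Z\in\g{h}$, one may subtract the residual $\g{g}_{2\alpha}$-component to place $T_U\neq 0$ in $\Ad(g)\g{h}$). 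It is worth noting that your $V_0$ may be chosen in $J\g{w}$, since $\ad(T_U)$ interchanges $\g{w}$ and $J\g{w}$, so your construction secretly recovers an eigenvector of the paper's $\Phi_W$; still, the routes are distinct in substance: the paper's is shorter and uses the Lagrangian condition only through $\g{g}_\alpha\ominus\g{w}=J\g{w}$, whereas yours mirrors the weight-space (Di Scala-type) decomposition that the paper employs in the complex Euclidean case, and it makes explicit the $J$-invariant subspace $\g{g}_\alpha^1$ on which the helicoidal components act — arguably a more structural picture, at the cost of the coordinate bookkeeping. (Minor point: your operator $\Phi$ clashes notationally with the paper's diffeomorphism $\Phi\colon AN\to\CH^n$ and with its $\Phi_W$; rename it if this text is to coexist with the paper's.)
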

\begin{proof}
If $S \in \g{h} \cap \g{t}$ and $U\in\g{w}$, then $0=[S, T_U+U]=[S, U]$, and hence $[\g{h} \cap \g{t},\g{w}]=0$.
Since $\g{w}$ is Lagrangian in $\g{g}_\alpha$, Lemma~\ref{lemma:adJ} ensures that $[\g{h}\cap\g{t},\g{g}_\alpha]=0$.
But the action of $K_0$ on $\g{g}_\alpha$ is effective, so this implies $\g{h}\cap\g{t}=0$, and thus, $\dim\g{h}=n$ by~\eqref{eq:dims}.

We prove that $T_Z=0$.
For each $U\in\g{w}$ we have
$0=[T_U+U, T_Z+Z]=-[T_Z, U]$.
From Lemma~\ref{lemma:adJ} this implies $\ad(T_Z)\g{g}_\alpha=\ad(T_Z)\C\g{w}=0$, and since the action of $K_0$ is effective on $\g{g}_\alpha$, we get $T_Z=0$. In particular $Z\in\g{h}$.

Given $U$, $V\in\g{w}$ we have $0=[T_U+U,T_V+V]=[T_U,V]-[T_V,U]$.
Hence, for $U$, $V$, $W\in\g{w}$ we get
\begin{align*}
\langle[T_U,V],W\rangle
&{}=-\langle V,[T_U,W]\rangle
=-\langle V,[T_W,U]\rangle
=\langle[T_W,V],U\rangle
=\langle[T_V,W],U\rangle\\
&{}=-\langle[T_V,U],W\rangle
=-\langle[T_U,V],W\rangle,
\end{align*}
from where it follows that $\langle[T_U,V],W\rangle=0$, or equivalently,
$[T_U,V]\in\g{g}_\alpha\ominus\g{w}=J\g{w}$.
Using Lemma~\ref{lemma:adJ} this implies $[JV,T_U]=J[V,T_U]\in\g{w}$.
Therefore, for each $W\in\g{w}$ we can define the endomorphism of $\g{w}$
\begin{align}\label{eq:Phi}
\Phi_W\colon \g{w} \to \g{w},\ U \mapsto [JW, T_U].
\end{align}
This map $\Phi_W$ is self-adjoint. Indeed,
\begin{align*}
\langle \Phi_W(U), V \rangle
&{}= \langle [JW, T_U], V \rangle=\langle JW, [T_U, V] \rangle
\\
&{}=\langle JW, [T_V, U] \rangle
=\langle[JW,T_V],U\rangle = \langle \Phi_W(V), U \rangle.
\end{align*}
Therefore, by the spectral theorem, $\Phi_W$ is diagonalizable with real eigenvalues.

We will show that $\Phi_W=0$ for all $W\in\g{w}$.
On the contrary, assume that $\Phi_W \neq 0$ for some non-zero $W\in\g{w}$.
Then there exists an eigenvector $V\in \g{w}$ with a non-zero eigenvalue $\lambda \in \R$,
that is, $[JW,T_V]=\Phi_W(V)=\lambda V$.
Let $g=\Exp(-\frac{1}{\lambda} JW)\in AN$.
Then,
\begin{align*}
&\Ad(g)\Bigl(T_V+V-\frac{1}{2\lambda}\langle W,V\rangle Z\Bigr)
\\
&\qquad{}=T_V+V-\frac{1}{2\lambda}\langle W,V\rangle Z
-\frac{1}{\lambda}[JW,T_V]+\frac{1}{\lambda}\langle W,V\rangle Z
-\frac{1}{2\lambda^2}\langle W,[JW,T_V]\rangle Z
=T_V
\end{align*}
has trivial projection onto $\g{a}\oplus\g{n}$. Since
$n=\dim\g{h}=\dim\Ad(g)\g{h}$ we get $\dim(\Ad(g)\g{h})_{\g{a}\oplus\g{n}}\leq n-1$.
Thus, the orbit of $H$ through $g^{-1}(o)$ would not be a Lagrangian submanifold, which contradicts the assumption that $H$ induces a homogeneous Lagrangian foliation.
Therefore $\Phi_W=0$ for all $W \in \g{w}$, which, by Lemma~\ref{lemma:adJ} and the fact that $\C\g{w}=\g{g}_\alpha$, implies $\ad(T_U)(\g{g}_\alpha)=0$ for all $U\in\g{w}$. Since $K_0$ acts effectively on $\g{g}_\alpha$, we get $T_U=0$ for all $U\in\g{w}$.
Therefore $\g{h}=\g{w} \oplus \g{g}_{2\alpha}$.
\end{proof}

Finally, it follows from the bracket relations of $\g{a}\oplus\g{n}$ that $\g{w}\oplus\g{g}_{2\alpha}$ is an ideal of $\g{a}\oplus\g{n}$.
Hence, for each $g\in AN$, $\Ad(g)\g{h}=\g{h}$, and thus, $gHg^{-1}=H$.
Since $AN$ acts transitively on $\CH^n$ and $H\cdot g^{-1}(o)=g^{-1}(gHg^{-1}\cdot o)=g^{-1}(H\cdot o)$ for all $g\in AN$, this implies that the Lie subgroup $H$ of $G=\s{SU}(1,n)$ whose Lie algebra is $\g{h}=\g{w}\oplus\g{g}_{2\alpha}$, where $\g{w}$ is a Lagrangian subspace of $\g{g}_\alpha$, acts on $\CH^n$ in such a way that all its orbits are congruent to each other. But $T_o(H\cdot o)\cong \g{h}$ is a Lagrangian subspace of $T_o\CH^n\cong\g{a}\oplus\g{n}$, and since $H$ is made of holomorphic isometries, we get that $H\cdot o$, and therefore all other $H$-orbits, are Lagrangian submanifolds. See~\cite{HK} for further details.
This concludes the proof of the Main Theorem.

\end{document}